\newtheorem{theorem}{Theorem}
\newtheorem{prop}[theorem]{Proposition}
\newtheorem{lemma}[theorem]{Lemma}
\theoremstyle{definition}
\newtheorem{definition}[theorem]{Definition}
\theoremstyle{remark}
\numberwithin{equation}{section}
     \title[Alternative summations for $G_2$ and $\wp$]{Alternative summation orders for the Eisenstein series~$G_2$ and Weierstrass $\boldsymbol{\wp}$-function}
     \author{Dan Romik}
     \address{Department of Mathematics\\
  University of California, Davis, One Shields Avenue, Davis, CA 95616, USA}
     \email[]{romik@math.ucdavis.edu}
     \author{Robert Scherer}
     \email[]{rscherer@math.ucdavis.edu}
     \thanks{This material is based upon work supported by the National Science Foundation under Grant No. DMS-1800725.}
\thanks{\emph{2010 Mathematics Subject Classification}: 30B99, 40A05}
\begin{document}
     \begin{abstract}
We consider alternative orders of summation for the conditionally convergent series defining the weight-2 Eisenstein series $G_2$ and the Weierstrass $\wp$-function. The resulting sums differ from the standard ones by a residual term that can be thought of as a function of the shapes with respect to which we sum. We compute this residual function explicitly and give some examples. The results generalize the well-known quasimodularity relationship between $G_2$ and its series summed in the reverse order.
\end{abstract}
     \maketitle
{

\section{Introduction}

\label{sec:intro}

Students of analysis learn that series that are conditionally convergent can be rearranged in a way that changes their value. In the case of series of real numbers, this is the well-known Riemann series theorem, that states that a rearrangement can change the value of a conditionally convergent series to an arbitrary real number \cite[Ch.~3]{rudin}; this is generalized for conditionally convergent series of complex numbers, or more generally vectors of arbitrary dimension, by a (less well-known) result known as the L\'evy-Steinitz theorem~\cite{rosenthal}. 

The examples typically given in an undergraduate calculus or analysis course to illustrate this rearrangement phenomenon have a somewhat uninspiring and contrived feel to them, which may lead some students encountering these ideas for the first time to wonder if this phenomenon is something they really need to worry about: does it belong to the realm of ``pathological'' examples, or does it come up in actual examples that appear ``in nature''? In fact, in complex analysis one encounters a particularly natural and explicit example of the rearrangement of a conditionally convergent series, involving the \textbf{weight-$2$ Eisenstein series} $G_2$ (also referred to in \cite{Stein} as the \textbf{forbidden Eisenstein series}). Recall that $G_2$ is the holomorphic function on the upper half-plane $\mathbb{H}=\{\tau\in\mathbb{C}:\text{Im}(\tau)>0\}$ defined by
\begin{equation} \label{eq:def-g2}
G_2(\tau)=\sum_{m} \left[\sum_{n} \frac{1}{(n+m\tau)^2}\right],
\end{equation}
summed in the indicated order over all $m,n\in\mathbb{Z}$, except for $(m, n)= (0,0)$ where the summand is undefined (see \cite[Ch.~1]{Apostol}). 
The series \eqref{eq:def-g2} does not converge absolutely; moreover, Gotthold Eisenstein proved in 1847 \cite[pp.~416--418]{eisenstein} (see also \cite[p.~127]{roy}) that switching the order of summation in the double sum yields
\begin{equation}
\label{eq:g2-order-summation}
\sum_{n}\left[\sum_{m}  \frac{1}{(n+m\tau)^2}\right]=\sum_{m}\left[\sum_{n} \frac{1}{(n+m\tau)^2}\right]-\frac{2\pi i}{\tau}
\end{equation}
(with both series excluding $(m,n)=(0,0)$ as above).
That is, the change in the order of summation results in a predictable change in the value of the series, given by the ``residual term'' $-2\pi i/\tau$. (Interestingly, Eisenstein's result predates by several years Riemann's more general rearrangement theorem, which he discovered around 1852--1853 after hearing from Dirichlet about another specific instance of the phenomenon; see \cite[p.~124]{grattan-guinness}.)

The identity~\eqref{eq:g2-order-summation} arises naturally when one considers the behavior of the Eisenstein series under the modular transformation $\tau\mapsto -1/\tau$. Specifically, a key property of $G_2$ is that it satisfies
the so-called quasimodularity identity (derived in the exercises of \cite[Ch.~3]{Apostol})
\begin{equation}
\label{eq:g2-order-summation-equivalent}
\tau^{-2}G_2(-1/\tau)=G_2(\tau)-\frac{2\pi i}{\tau}.
\end{equation}
In fact, evaluating $G_2$, as defined above, at $-1/\tau$ and manipulating the double sum shows that 
\[
\tau^{-2}G_2(-1/\tau)=\sum_{n}\left[\sum_{m}  \frac{1}{(n+m\tau)^2}\right],\]
so \eqref{eq:g2-order-summation-equivalent} is equivalent to \eqref{eq:g2-order-summation}.
Another identity, similar to \eqref{eq:g2-order-summation}, occurs when considering another well-known function from complex analysis, the \textbf{Weierstrass $\wp$-function}.
Specifically (see Proposition~\ref{prop:wp-twoorders} below)
\[
\sum_{n}\left[\sum_{m}\frac{1}{(z+n+m\tau)^2}\right] =\sum_{m}\left[\sum_{n}\frac{1}{(z+n+m\tau)^2}\right]-\frac{2\pi i}{\tau},
\]
for $\tau \in \mathbb{H}$, $z\not\in\mathbb{Z}\tau+\mathbb{Z}$, where in this case there is no need to exclude $(m,n)=(0,0)$ from the summations.

Our goal in this paper is to show that the notion of a residual term can be generalized to a much larger class of orders of summation for the series defining $G_2$ and the Weierstrass $\wp$-function. Namely, we will assign to certain compact shapes in $\mathbb{R}^2$ the residual term that arises from partially summing the relevant infinite series over the integer lattice points $(m,n)$ inside a scaled-up copy of the shape and taking the limit of these sums as the scaling factor goes to infinity.
One of our main results (Theorem~\ref{main-thm} below) is an explicit formula for this residual term.

To this end, denote by $\mathcal{K}$ the class of compact sets $K \subset \mathbb{R}^2$ that are convex, have nonempty interior and are symmetric about the $x$ and $y$ axes. (For simplicity we restrict the discussion to this class of shapes, although it is possible to consider things at a greater level of generality; see the final comment in Section~\ref{sec:concluding}.)

We now come to the key definitions.
\newcommand{\shapesum}{\sum\limits}

\begin{definition}
For each $K\in\mathcal{K}$ we define $h_K$ to be the real-valued function whose graph is the upper boundary of the shape $K$. The function $h_K$ is necessarily compactly supported on an interval of the form $[-A,A]$, is an even function, and its reflection $-h_K$ is the lower boundary of $K$.
\end{definition}

\begin{definition}
For a shape $K \in \mathcal{K}$ and an array $(a_{m,n})_{m,n\in \mathbb{Z}}$ of complex numbers,
we define
\begin{equation} \label{eq:shape-summation}
\shapesum_K a_{m,n} := \lim_{\lambda\to \infty} \sum_{(m,n) \in (\lambda K )\cap \mathbb{Z}^2} a_{m,n},
\end{equation}
provided the limit exists. We refer to this sum as the \textbf{$K$-summation}, or shape summation with respect to the shape $K$, of the array $(a_{m,n})$.
\end{definition}
In the next definition we apply the concept of shape summation in a way that generalizes the definition of the Eisenstein series $G_2$. (The case of the Weierstrass $\wp$-function will be discussed in Section~\ref{sec:weierstrass}.)\smallskip\\

\begin{definition}
If $K\in \mathcal{K}$,
we denote by $G_2(K,\tau)$ the \textbf{$K$-summation of the weight-2 Eisenstein series}, defined as
\begin{equation}
\label{eq:eisenstein-shape-summation}
G_2(K,\tau):=\sum_{K}\frac{1}{(m\tau+n)^2},
\end{equation}
provided the limit defining the summation exists, and with the convention that $a_{0,0}=0$ in \eqref{eq:shape-summation}, to make allowance for the fact that the summand $\frac{1}{(m\tau+n)^2}$ is not defined for $m=n=0$.

\end{definition}

\begin{definition}
If $K\in\mathcal{K}$ and $G_2(K,\tau)$ is defined, 
we denote by $E(K,\tau)$ the \textbf{residual function associated to $K$}, which is defined as 
\[
E(K,\tau):=G_2(K,\tau)-G_2(\tau).
\]
\end{definition}

With these definitions in place we have the following result, which gives an explicit formula for the residual function.

\begin{theorem}
\label{main-thm}
For all $\tau\in\mathbb{H}$ and all $K\in\mathcal{K}$, the limit defining $G_2(K,\tau)$ exists. The residual function $E(K,\tau)$ is given by
\begin{equation} \label{eq:residual}
E(K,\tau)=4\int_0^A\frac{h_K(x)}{\tau^2x^2-h_K^2(x)}\, dx 
\end{equation}
(where as before, $A$ denotes a number for which $h_K$ is supported on $[-A,A]$).
\end{theorem}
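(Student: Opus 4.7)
The plan is to row-partition the partial lattice sum defining $G_2(K,\tau)$ (indexed by $m$), compare each row's inner sum to the completed sum $\sum_{n\in\mathbb{Z}}(n+m\tau)^{-2}=\pi^2/\sin^2(\pi m\tau)$ appearing in $G_2(\tau)$, and recognize the row-by-row discrepancy, after the substitution $m=\lambda x$, as a Riemann sum for the integral in \eqref{eq:residual}. Let $N(m,\lambda):=\lfloor\lambda h_K(m/\lambda)\rfloor$, so that $S_\lambda:=\sum_{(m,n)\in\lambda K\cap\mathbb{Z}^2\setminus\{(0,0)\}}(n+m\tau)^{-2}$ splits into the $m=0$ piece (converging to $\pi^2/3$, matching the $m=0$ row of $G_2(\tau)$) and a sum of truncated inner sums over $0<|m|\le\lambda A$.

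For each such row I would invoke the Eisenstein-style telescoping identity $\frac{1}{a^2}=\frac{1}{a(a+1)}+\frac{1}{a^2(a+1)}$ with $a=n+m\tau$. Summing over $|n|\le N$ makes the $\frac{1}{a(a+1)}$ piece telescope to $\frac{1}{m\tau-N}-\frac{1}{m\tau+N+1}$, while the $\frac{1}{a^2(a+1)}$ piece is absolutely convergent and, when extended to all $n\in\mathbb{Z}$, equals $\pi^2/\sin^2(\pi m\tau)$. Consequently the $m$-th row contribution to $S_\lambda - G_2(\tau)$ equals
\begin{equation*}
\frac{2N+1}{(m\tau-N)(m\tau+N+1)} \;-\;\varepsilon(m,N), \qquad \varepsilon(m,N):=\sum_{|n|>N}\frac{1}{(n+m\tau)^2(n+m\tau+1)},
\end{equation*}
evaluated at $N=N(m,\lambda)$, where $|\varepsilon(m,N)|=O_\tau(1/\max(N,|m|)^2)$ using the lower bounds $|n+m\tau|\ge|m|\operatorname{Im}\tau$ and $|n+m\tau|\ge|n|/2$ for $|n|\ge 2|m\operatorname{Re}\tau|$. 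The exponential decay of $\pi^2/\sin^2(\pi m\tau)$ in $|m|$ disposes of the $|m|>\lambda A$ tail of $G_2(\tau)$.

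After the substitution $x=m/\lambda$ the main term becomes
\begin{equation*}
S_\lambda - G_2(\tau) \;=\; \sum_{0<|m|\le\lambda A}\frac{1}{\lambda}\cdot\frac{2h_K(m/\lambda)}{\tau^2(m/\lambda)^2-h_K^2(m/\lambda)} \;+\; o(1),
\end{equation*}
a Riemann sum of mesh $1/\lambda$ for the integrand $g(x):=2h_K(x)/(\tau^2 x^2-h_K^2(x))$, which is continuous on $[-A,A]$ because $\tau\in\mathbb{H}$ forces the denominator away from zero (at $x=0$ it equals $-h_K(0)^2$; for $x\neq 0$ writing $\tau=\alpha+\beta i$ shows the denominator has imaginary part $2\alpha\beta x^2$, nonzero when $\alpha\neq 0$, and when $\alpha=0$ is the strictly negative real number $-\beta^2 x^2-h_K^2(x)$). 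The sum thus converges to $\int_{-A}^A g(x)\,dx$, which by the evenness of $h_K$ equals $4\int_0^A h_K(x)/(\tau^2 x^2-h_K^2(x))\,dx$. This simultaneously establishes the existence of $G_2(K,\tau)$ and the announced formula for $E(K,\tau)$.

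The main technical obstacle will be verifying that $\sum_{0<|m|\le\lambda A}|\varepsilon(m,N(m,\lambda))|=o(1)$ and that the floor-function perturbation $N(m,\lambda)=\lambda h_K(m/\lambda)+O(1)$ does not corrupt the Riemann-sum limit. Splitting into a ``bulk'' range $|m|\le\epsilon\lambda$ (where $N(m,\lambda)\asymp\lambda$) and a ``boundary'' range $|m|>\epsilon\lambda$ (where $|m|\asymp\lambda$), the uniform $\max(N,|m|)$-denominator bound yields $O(1/\lambda^2)$ per term in each regime, for a total error of $O(1/\lambda)$. The Lipschitz regularity of $h_K$ on compact subsets of $(-A,A)$, inherited from the concavity that convexity of $K$ provides, supplies the estimates needed to compare $(N+1)/(m^2\tau^2-N^2-N)$ with its ``continuous'' value $h_K(m/\lambda)/(\tau^2(m/\lambda)^2-h_K^2(m/\lambda))$; continuity of $g$ at the endpoints (where $h_K$ vanishes) keeps the boundary rows under control.
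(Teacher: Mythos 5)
Your proposal is correct and is essentially the paper's own argument: both rest on the Eisenstein splitting $\frac{1}{a^2}=\frac{1}{a(a+1)}+\frac{1}{a^2(a+1)}$ with $a=n+m\tau$, telescoping the first piece over each truncated row to get a main term asymptotic to $\frac{1}{\lambda}\cdot\frac{2h_K(m/\lambda)}{\tau^2(m/\lambda)^2-h_K^2(m/\lambda)}$ (hence a Riemann sum for the stated integral), and disposing of the second piece by absolute convergence. The only difference is organizational --- you run the comparison with $G_2(\tau)$ row by row with explicit tail estimates $\varepsilon(m,N)$, whereas the paper isolates the telescoping series' $K$-summation as a separate lemma and then invokes rearrangement-invariance of the absolutely convergent series $\sum_{m\neq 0}\sum_n\frac{1}{(m\tau+n)^2(m\tau+n+1)}$ --- which does not change the substance of the proof.
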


\noindent\textbf{A motivating example: rectangles.} To motivate the result, consider first the simplest example, namely when $K$ is the rectangle $[-c,c]\times[-1,1]$ with aspect ratio $c$, for some $c>0$. In this case, $h_K$ is the indicator function $h_K(x)=\chi_{[-c,c]}(x)$. Evaluating the integral in \eqref{eq:residual} gives that
$$ E(K,\tau)=G_2(K,\tau)-G_2(\tau)=-\frac{4}{\tau}\tanh^{-1}(c \tau).$$
In terms of the principal branch of the logarithm, the latter expression is
\[
-\frac{2}{\tau}[\log(1+c\tau)-\log(1-c\tau)].
\]
This already seems interesting, since in particular note that we can interpret the limiting case $c\to0$ of the shape summation \eqref{eq:eisenstein-shape-summation} to represent a summation with respect to an ``infinitely tall and narrow'' rectangle, that is, first summing over $n$ and then over $m$ as in the original definition \eqref{eq:def-g2} of $G_2(\tau)$. The residual function in that case should be $0$, and indeed we have that
$\lim_{c\to 0}  -\frac{4}{\tau}\tanh^{-1}\left(c \tau \right)=0$.
At the other extreme, we can interpret the case $c\to \infty$ to represent summing with respect to an ``infinitely long and thin'' rectangle, that is, first summing over $m$ and then $n$. 
In this case we have that
$\lim_{c\to\infty} -\frac{4}{\tau}\tanh^{-1}\left(c \tau \right)=-\frac{2\pi i}{\tau}$, and indeed this is consistent with the relation \eqref{eq:g2-order-summation}, which can now be understood as giving the residual function of such long and thin rectangles. Thus we see that summing with respect to rectangles provides a conceptual generalization of \eqref{eq:g2-order-summation}.

\section{Proof of Theorem \ref{main-thm}}

A key ingredient in the proof is the following Lemma \ref{key-lemma}, which shows that the right-hand side of \eqref{eq:residual} is the $K$-summation of a different series than \eqref{eq:eisenstein-shape-summation}, which has the advantage of being a telescoping series in the summation index~$n$. In proving Theorem \ref{main-thm}, we will use this series to write $G_2(\tau)$ as a series that converges absolutely, which will allow us to compare it with $G_2(K,\tau)$ and show that the series in Lemma \ref{key-lemma} coincides with the residual function $E(K,\tau)$.
\begin{lemma}
\label{key-lemma}
Let $K\in \mathcal{K}$ be a shape with corresponding function $h_K,$ supported on $[-A,A].$
Then
\[\sum_{K} \left(\frac{1}{m\tau+n}-\frac{1}{m\tau+n+1}\right)=4\int_0^A \frac{h_K(x)}{\tau^2x^2-h_K^2(x)}\, dx,\]
where we exclude summands corresponding to $m=0$, that is, we set $a_{0,n}=0$ for all $n$ in \eqref{eq:shape-summation}.
\end{lemma}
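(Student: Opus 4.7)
My plan is to compute the $K$-summation directly and recognize the resulting limit as a Riemann sum. Fix $\lambda>0$. Since $K$ is symmetric about the $x$-axis, for each $m\in\mathbb{Z}$ with $0<|m|\le \lambda A$ the vertical slice $\{n\in\mathbb{Z}:(m,n)\in \lambda K\}$ is the integer interval $[-N_m,N_m]$, where $N_m:=\lfloor \lambda h_K(m/\lambda)\rfloor$. Because the summand is a first difference in $n$, the inner sum telescopes:
\[
\sum_{n=-N_m}^{N_m}\!\left(\frac{1}{m\tau+n}-\frac{1}{m\tau+n+1}\right)=\frac{1}{m\tau-N_m}-\frac{1}{m\tau+N_m+1}=\frac{2N_m+1}{(m\tau-N_m)(m\tau+N_m+1)}.
\]
Hence the $\lambda$-th partial sum reduces to the one-dimensional expression
\[
S_\lambda:=\sum_{0<|m|\le \lambda A}\frac{2N_m+1}{(m\tau-N_m)(m\tau+N_m+1)}.
\]

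Next I would recognize $S_\lambda$ as a Riemann sum of step $1/\lambda$. Writing $x=m/\lambda$ and $N_m=\lambda h_K(x)+\varepsilon_m$ with $\varepsilon_m\in(-1,0]$, a short expansion of the numerator and denominator gives
\[
\frac{2N_m+1}{(m\tau-N_m)(m\tau+N_m+1)}=\frac{1}{\lambda}\cdot\frac{2h_K(x)}{\tau^2 x^2-h_K^2(x)}+O\!\left(\lambda^{-2}\right),
\]
uniformly in $m$. This uses the crucial fact that $\tau^2 x^2-h_K^2(x)$ is bounded away from zero on $[-A,A]$: for $\tau\in\mathbb{H}$ the imaginary part $\mathrm{Im}(\tau^2)x^2$ vanishes only at $x=0$ (and if $\tau$ is purely imaginary the whole expression is real and strictly negative), while at $x=0$ the quantity equals $-h_K^2(0)\ne 0$ because $K$ has nonempty interior. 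Summing over $m$ then shows that $S_\lambda$ equals
\[
\frac{1}{\lambda}\sum_{0<|m|\le \lambda A}\frac{2h_K(m/\lambda)}{\tau^2(m/\lambda)^2-h_K^2(m/\lambda)}+O\!\left(\lambda^{-1}\right),
\]
a Riemann sum for the continuous function $g(x):=2h_K(x)/(\tau^2 x^2-h_K^2(x))$ on $[-A,A]$, which converges as $\lambda\to\infty$ to $\int_{-A}^A g(x)\,dx=4\int_0^A h_K(x)/(\tau^2 x^2-h_K^2(x))\,dx$ by evenness of $h_K$.

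The main obstacle is the uniform error control in the Riemann-sum step, especially for indices $m$ with $|m/\lambda|$ close to $A$, where $h_K(m/\lambda)$ is small and $N_m$ may be zero. For those boundary indices the summand degenerates to $1/(m\tau(m\tau+1))=O(1/m^2)$, and one has to check that these terms sum to $O(1/\lambda)$ and are correctly accounted for by the endpoint contribution of the Riemann sum. Continuity of $h_K$ on $[-A,A]$ (inherited from concavity) together with the uniform non-vanishing of $\tau^2 x^2-h_K^2(x)$ makes $g$ Riemann integrable and makes all the error bounds above uniform. Once these bookkeeping details are carried out, the identity of the lemma follows at once.
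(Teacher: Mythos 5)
Your proof is correct and follows essentially the same route as the paper's: telescope the inner sum over $n$ to reduce to a single sum over $m$, then identify that sum (after dividing by $\lambda$) as a Riemann sum for the stated integral, using that $\tau^2x^2-h_K^2(x)$ is bounded away from zero on $[-A,A]$. The only cosmetic difference is that you keep the telescoped quantity as the single fraction $\tfrac{2N_m+1}{(m\tau-N_m)(m\tau+N_m+1)}$ with a uniform $O(\lambda^{-2})$ error, whereas the paper splits it into a main term $\tfrac{2N_m}{m^2\tau^2-N_m^2}$ plus a remainder $\tfrac{1}{(m\tau+N_m)(m\tau+N_m+1)}$ whose sum over $m$ vanishes in the limit.
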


\begin{proof} For brevity of notation, we set $h=h_K$ for the rest of the proof. To evaluate the sum in the lemma,
we rewrite it as 
\begin{align}
\label{eq:rewrite-1}
&\sum_{K} \left(\frac{1}{m\tau+n}-\frac{1}{m\tau+n+1}\right)\\
&=\lim_{\lambda\to\infty}\sum_{\substack{-\lambda A\leq m\leq \lambda A\\ m\neq 0}}\,\,\sum_{-\lambda h(m/\lambda)\leq n\leq \lambda h(m/\lambda)}\left(\frac{1}{m\tau+n}-\frac{1}{m\tau+n+1}\right). \nonumber
\end{align}
\noindent After telescoping the inner summation, this becomes
\begin{align*}
&\lim_{\lambda\to\infty}\sum_{\substack{-\lambda A\leq m\leq \lambda A\\ m\neq 0}}
\Big(\frac{1}{m\tau - \lfloor\lambda h(\frac{m}{\lambda})\rfloor} - \frac{1}{m\tau +\lfloor\lambda h(\frac{m}{\lambda})\rfloor}
+ \frac{1}{m\tau +\lfloor \lambda h(\frac{m}{\lambda})\rfloor} 
\\ & \hspace{220pt}
- \frac{1}{m\tau + \lfloor\lambda  h(\frac{m}{\lambda})\rfloor+1}\Big)\\
&=\lim_{\lambda\to\infty}\sum_{\substack{-\lambda A\leq m\leq \lambda A\\ m\neq 0}}
\left[\left(\frac{ 2 \lfloor\lambda  h(\frac{m}{\lambda})\rfloor }{m^2\tau^2-\lfloor\lambda h(\frac{m}{\lambda})\rfloor ^2}\right)
+\frac{1}{(m\tau+\lfloor\lambda h(\frac{m}{\lambda})\rfloor)
(m\tau+\lfloor\lambda h(\frac{m}{\lambda})\rfloor+1)}\right]\\
&=2\lim_{\lambda\to\infty}\sum_{1\leq m\leq \lambda A}\left(\frac{ 2 \lfloor\lambda  h(\frac{m}{\lambda})\rfloor }{m^2\tau^2-\lfloor\lambda h(\frac{m}{\lambda})\rfloor ^2}\right)\\
&=4\lim_{\lambda\to\infty}\frac{1}{\lambda}\sum_{1\leq m\leq \lambda A}
\left(\frac{ \lambda^{-1}\lfloor\lambda  h(\frac{m}{\lambda})\rfloor }{\lambda^{-2}m^2\tau^2-\lambda^{-2}\lfloor\lambda h(\frac{m}{\lambda})\rfloor ^2}\right).
\end{align*}

Since, $\lambda^{-1}\lfloor\lambda  h(\frac{m}{\lambda})\rfloor\sim h(\frac{m}{\lambda})$ and $\lambda^{-2}\lfloor\lambda  h(\frac{m}{\lambda})\rfloor ^2\sim h(\frac{m}{\lambda})^2$ as $\lambda \to \infty,$ the above limit is the same as the limit of a Riemann sum,
\[
4\lim_{\lambda\to\infty}\frac{1}{\lambda}\sum_{1\leq m\leq \lambda A }
\left(\frac{ h(\frac{m}{\lambda}) }{\lambda^{-2}m^2\tau^2- h^2(\frac{m}{\lambda})}\right),
\]
which is the integral in the lemma.
\end{proof}

We now observe that
\[
\sum_{m\neq 0}\left[\sum_{n\in\mathbb{Z}}\left(\frac{1}{m\tau+n} -\frac{1}{m\tau +n+1}\right)\right]=0.
\]
To see this, note that for any $m\neq 0$, the inner sum converges absolutely  (since the summands are $O(1/n^2)$ as $n\to \infty$), and is equal to   
\[
\lim_{N\to\infty} \sum_{n=-N}^{N-1} \left(\frac{1}{m\tau+n} -\frac{1}{m\tau +n+1}\right)=
\lim_{N\to\infty} 
\left(\frac{1}{m\tau-N} -\frac{1}{m\tau+N}\right)=0.
\]

Thus, we can write $G_2(\tau)$ in a different form, namely
\begin{align*} 
G_2(\tau)- \sum_{n\neq0}\frac{1}{n^2}&=\sum_{m\neq 0}\left[\sum_{n\in\mathbb{Z}} \frac{1}{(n+m\tau)^2}\right]\\
&=\sum_{m\neq 0} \left[\sum_{n\in\mathbb{Z}} \frac{1}{(n+m\tau)^2}\right]-
\sum_{m\neq 0}\left[\sum_{n\in\mathbb{Z}}\left(\frac{1}{m\tau+n} -\frac{1}{m\tau +n+1}\right)\right]\\
&=\sum_{m\neq 0}\left[\sum_{n\in\mathbb{Z}}\frac{1}{(m\tau+n)^2(m\tau+n+1)}\right].
\end{align*}
The latter series has the advantage of converging absolutely by comparison with $\sum_{m}\sum_{n}\frac{1}{(m\tau+n)^3}$ (see Lemma 1.1 in [2]).

Next we observe, still setting $h=h_K$, that Definition 3 can be expressed as 

\begin{align}\label{eq:rewrite-2}
G(K,\tau)=&\sum_{n\neq 0}\frac{1}{n^2}\\
&+\lim_{\lambda\to\infty}\sum_{\substack{-\lambda A\leq m\leq \lambda A\\ m\neq 0}}\,\,\,\,\sum_{-\lambda h(m/\lambda)\leq n\leq \lambda h(m/\lambda)} \frac{1}{(m\tau+n)^2}.\notag\\
\notag
\end{align}

By combining \eqref{eq:rewrite-1} and \eqref{eq:rewrite-2} , we obtain
\begin{align*}
&G_2(K,\tau)-\sum_{n\neq0}\frac{1}{n^2}-\sum_{K} \left(\frac{1}{m\tau+n}-\frac{1}{m\tau+n+1}\right)\\
&=\lim_{\lambda\to\infty}\sum_{\substack{-\lambda A\leq m\leq \lambda A\\ m\neq 0}}\,\,\,\sum_{-\lambda h(m/\lambda)\leq n\leq \lambda h(m/\lambda)} \left(\frac{1}{(m\tau+n)^2}- \frac{1}{m\tau+n}+\frac{1}{m\tau+n+1}\right)\\
&=\lim_{\lambda\to\infty} \left[\sum_{\substack{-\lambda A\leq m\leq \lambda A\\ m\neq 0}}\,\,\,\sum_{-\lambda h(m/\lambda)\leq n\leq \lambda h(m/\lambda)}
\frac{1}{(m\tau+n)^2(m\tau+n+1)}\right]\\
&=\sum_{m\neq 0}\left[\sum_{n\in\mathbb{Z}}\frac{1}{(m\tau+n)^2(m\tau+n+1)}\right],
\end{align*}
where in the last equality we have appealed to absolute convergence to justify rearranging the series.
From the above we see that $$E(K,\tau)=G_2(K,\tau)-G_2(\tau)=\sum_{K} \left(\frac{1}{m\tau+n}-\frac{1}{m\tau+n+1}\right).$$ When we replace the latter sum with the integral expression from the lemma, the proof of Theorem~\ref{main-thm} is complete.
\qed

\section{$K$-summation of the Weierstrass $\wp$-function}

\label{sec:weierstrass}

Recall that the Weierstrass $\wp$-function is a function of two complex variables $\tau, z$ (with the dependence on $\tau$ usually suppressed in the notation) defined as
\begin{equation} \label{eq:wp-def}
\wp(z) :=\frac{1}{z^2}+\sum_{(m,n)\in \mathbb{Z}^2\setminus \{(0,0)\}}\left[\frac{1}{(z+n+m\tau)^2}-\frac{1}{(n+m\tau)^2}\right],
\end{equation}
for $\tau\in\mathbb{H}$ and $z\not\in \mathbb{Z}\tau+\mathbb{Z}$. 
The sum is absolutely convergent, but is only made so via the introduction of the ``normalization term''
$\frac{1}{(n+m\tau)^2}$. Indeed, the $\wp$-function is a fundamental object in the theory of doubly-periodic (or elliptic) functions, and the basic idea underlying its definition \eqref{eq:wp-def} is to try to construct a doubly-periodic function with the two periods $1,\tau$ by summing copies of a single term (for which the best choice turns out to be the meromorphic function $z^{-2}$, which has a pole of order~$2$ at the origin) translated over the lattice $\mathbb{Z}\tau+\mathbb{Z}$. This results in the series $\sum_{m,n} \frac{1}{(z+n+m\tau)^2}$, which however is only conditionally convergent. Subtracting $\frac{1}{(n+m\tau)^2}$ from each summand with $(m,n)\neq (0,0)$ turns the series into an absolutely convergent one, and, conveniently, still ends up producing a doubly-periodic function \cite[Ch.~1]{Apostol}.

Thus, we see that thinking about the definition of $\wp(z)$ and its motivation leads one to consider different orders of summation for the conditionally convergent infinite series $\sum_{m,n} \frac{1}{(z+n+m\tau)^2}$, in a way that is precisely analogous to the situation with $G_2$, and that --- it turns out --- will lead to exactly the same notion of ``residual function'' we already defined.

As with $G_2$, the two most obvious orders for summing the series are as iterated summations with respect to the summation indices $m,n$, in the two possible orders. The comparison between these two orders of summation is given in the following result (which is a more explicit version of a result stated implicitly as part of an exercise on p.~281 of the popular textbook \cite{Stein}), analogous to \eqref{eq:g2-order-summation}.

\begin{prop}
\label{prop:wp-twoorders}
For $z\not\in \mathbb{Z}\tau+\mathbb{Z},$ the following identity holds, where summations are in the indicated order, over all pairs $(m,n)\in\mathbb{Z}^2.$
\[
\sum_{m}\left[\sum_{n}\frac{1}{(z+n+m\tau)^2}\right]=\sum_{n}\left[\sum_{m}\frac{1}{(z+n+m\tau)^2}\right]+\frac{2\pi i}{\tau}.
\]
\end{prop}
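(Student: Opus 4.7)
The plan is to mimic the telescoping decomposition of the summand $\tfrac{1}{(z+n+m\tau)^2}$ used in the proof of Theorem~\ref{main-thm}, now in the presence of the shift by $z$. The algebraic identity
\[
\frac{1}{(z+n+m\tau)^2}=\left(\frac{1}{z+n+m\tau}-\frac{1}{z+n+m\tau+1}\right)+\frac{1}{(z+n+m\tau)^2(z+n+m\tau+1)}
\]
separates the summand into a piece that telescopes in $n$ and a remainder that is $O(1/|n+m\tau|^3)$, hence absolutely summable over $(m,n)\in\mathbb{Z}^2$.

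First, for each fixed $m$, the telescoping piece sums to $0$ in $n$ (via symmetric partial sums), so $\sum_n\tfrac{1}{(z+n+m\tau)^2}=\sum_n\tfrac{1}{(z+n+m\tau)^2(z+n+m\tau+1)}$. Summing over $m$ and using absolute convergence of the remainder double series to swap summations yields
\[
\sum_m\Bigl[\sum_n\frac{1}{(z+n+m\tau)^2}\Bigr]=\sum_n\sum_m\frac{1}{(z+n+m\tau)^2(z+n+m\tau+1)}.
\]
For each fixed $n$, I would then rewrite the inner sum on the right as the difference $\sum_m\tfrac{1}{(z+n+m\tau)^2}-\sum_m\tfrac{1}{(z+n+m\tau)(z+n+m\tau+1)}$ of two absolutely convergent sums (both $O(1/m^2)$). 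Summing the first over $n$ reproduces the iterated sum on the right-hand side of the proposition, so the proof reduces to showing that $\sum_n\sum_m\tfrac{1}{(z+n+m\tau)(z+n+m\tau+1)}=-\tfrac{2\pi i}{\tau}$.

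For this last evaluation, I would apply partial fractions and the classical partial-fraction expansion of the cotangent: as a symmetric partial sum in $m$,
\[
\sum_m\frac{1}{(z+n+m\tau)(z+n+m\tau+1)}=\frac{\pi}{\tau}\bigl[\cot(\pi(z+n)/\tau)-\cot(\pi(z+n+1)/\tau)\bigr].
\]
The outer sum over $n$ telescopes, leaving $\tfrac{\pi}{\tau}\lim_{N\to\infty}[\cot(\pi(z-N)/\tau)-\cot(\pi(z+N+1)/\tau)]$. Since $\text{Im}(1/\tau)<0$, the arguments have imaginary parts tending to $+\infty$ and $-\infty$ respectively, and writing $\cot w=i(e^{2iw}+1)/(e^{2iw}-1)$ yields limits $-i$ and $i$, producing the value $-2\pi i/\tau$. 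The main delicacy to watch is the bookkeeping around conditional convergence: each rearrangement must be justified either by absolute convergence (for the remainder double series and for each $O(1/m^2)$ slice) or by an explicit telescoping argument, and all iterated sums must be interpreted consistently via symmetric partial sums, in line with the convention used in Section~2 and in \eqref{eq:g2-order-summation}.
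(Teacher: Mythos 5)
Your proof is correct, but it takes a genuinely different route from the paper's. The paper's argument is indirect and very short: it writes $\wp(z)$, whose defining series is made absolutely convergent by the normalization terms $\frac{1}{(n+m\tau)^2}$, as an iterated sum in each of the two orders, splits off the normalization terms to produce $G_2(\tau)$ in one order and $G_2(\tau)-\frac{2\pi i}{\tau}$ in the other via the already-known identity \eqref{eq:g2-order-summation}, and compares the two expressions. You instead adapt the telescoping decomposition from the proof of Theorem~\ref{main-thm} to the shifted summand, reduce both iterated sums to the absolutely convergent series $\sum\sum\frac{1}{(z+n+m\tau)^2(z+n+m\tau+1)}$, and then extract the constant by evaluating $\sum_n\sum_m\frac{1}{(z+n+m\tau)(z+n+m\tau+1)}=-\frac{2\pi i}{\tau}$ directly from the cotangent expansion and an outer telescoping in $n$. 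What the paper's route buys is brevity and the clean reduction to Eisenstein's identity, which it treats as known; what yours buys is self-containedness --- the $\frac{2\pi i}{\tau}$ is actually derived rather than imported, at the cost of invoking the classical partial-fraction expansion of $\cot$ and more convergence bookkeeping. That bookkeeping is handled correctly: the splitting $\sum_n(a_n-b_n)=\sum_n a_n-\sum_n b_n$ in your final step is legitimate because the telescoped sum $\sum_n b_n$ converges (to $-\frac{2\pi i}{\tau}$, using $\operatorname{Im}(1/\tau)<0$) and the difference series converges absolutely, which incidentally also establishes convergence of the iterated sum on the right-hand side of the proposition.
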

\begin{proof}
We rewrite the sum defining $\wp(z)$ in the following way, excluding from the summations the terms corresponding to $m=n=0$.
\begin{align*}
\wp(z)&=\frac{1}{z^2}+\sum_{m}\left[\sum_{n} \left(\frac{1}{(z+n+m\tau)^2}-\frac{1}{(m\tau+n)^2}\right)\right]\\
&=\frac{1}{z^2}+\sum_{m}\left[\sum_{n} \frac{1}{(z+n+m\tau)^2}\right]-\sum_{m}\left[\sum_{n} \frac{1}{(m\tau+n)^2}\right]\\
&=\frac{1}{z^2}+\sum_{m}\left[\sum_{n} \frac{1}{(z+n+m\tau)^2}\right]-G_2(\tau).
\end{align*}
Meanwhile, we can sum the function $\wp(z)$ in the reverse order, by absolute convergence, obtaining 
\begin{align*}
\wp(z)&=\frac{1}{z^2}+\sum_{n}\left[\sum_{m} \left(\frac{1}{(z+n+m\tau)^2}-\frac{1}{(m\tau+n)^2}\right)\right]\\
&=\frac{1}{z^2}+\sum_{n}\left[\sum_{m} \frac{1}{(z+n+m\tau)^2}\right]-\sum_{n}\left[\sum_{m} \frac{1}{(m\tau+n)^2}\right]\\
&=\frac{1}{z^2}+\sum_{n}\left[\sum_{m} \frac{1}{(z+n+m\tau)^2}\right]-G_2(\tau)+\frac{2\pi i}{\tau},
\end{align*}
by \eqref{eq:g2-order-summation}. The proposition follows by comparing these two expressions for $G_2(\tau)$. 
\end{proof}

The same exercise in \cite{Stein} mentioned above also considers a summation order for the series $\sum_{m,n} \frac{1}{(z+n+m\tau)^2}$ similar to our notion of summation with respect to a shape $K$, in the special case where $K$ is a disk. We consider the more general shape summation for the series associated with the $\wp$-function through the following definition.

\begin{definition}
For $K\in\mathcal{K}$ and $\tau\in\mathbb{H}$, we denote by $\wp(K,z)$ the $K$-summation
\begin{equation*}
\wp(K,z):=\sum_{K}\frac{1}{(z+n+m\tau)^2} \qquad (z\not\in \mathbb{Z}\tau+\mathbb{Z}),
\end{equation*}
provided the limit defining the summation exists.
We refer to $\wp(K,z)$ as the \textbf{$K$-summation of the Weierstrass $\wp$-function.}
\end{definition}

The next result shows that $\wp(K,z)$ is closely related to the residual function $E(K,\tau)$ that we studied in the previous sections.

\begin{prop}
\label{main-cor}
For $K\in \mathcal{K}$, $\tau\in\mathbb{H}$ and $z \not\in \mathbb{Z}\tau+\mathbb{Z},$ $\wp(K,\tau)$ is defined and satisfies
\[\wp(K,z)=\wp(z)+G_2(\tau)+E(K,\tau).\]
\end{prop}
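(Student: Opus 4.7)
The plan is to take the partial sum $S_\lambda := \sum_{(m,n)\in(\lambda K)\cap\mathbb{Z}^2}\frac{1}{(z+n+m\tau)^2}$ that appears in the definition of $\wp(K,z)$ and algebraically manipulate it so that the right-hand side of the identity emerges in the limit. The key trick is exactly the one used to define $\wp$ itself: add and subtract the normalization term $\frac{1}{(n+m\tau)^2}$ inside the sum, splitting $S_\lambda$ into one piece that is the truncation of an absolutely convergent series and another piece that is the truncation defining $G_2(K,\tau)$.

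Concretely, since $K\in\mathcal{K}$ is convex, has nonempty interior, and is symmetric about both axes, $K$ contains a neighborhood of the origin; hence for all sufficiently large $\lambda$ we have $(0,0)\in\lambda K$, and moreover every fixed lattice point lies in $\lambda K$ once $\lambda$ is large enough, so the sets $(\lambda K)\cap\mathbb{Z}^2$ exhaust $\mathbb{Z}^2$. For such $\lambda$ I would write
\[
S_\lambda=\frac{1}{z^2}+\sum_{\substack{(m,n)\in(\lambda K)\cap\mathbb{Z}^2\\(m,n)\neq(0,0)}}\!\left[\frac{1}{(z+n+m\tau)^2}-\frac{1}{(n+m\tau)^2}\right]+\sum_{\substack{(m,n)\in(\lambda K)\cap\mathbb{Z}^2\\(m,n)\neq(0,0)}}\!\frac{1}{(n+m\tau)^2}.
\]

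Now I would take $\lambda\to\infty$ in each piece separately. The middle sum is a truncation of the absolutely convergent series defining $\wp(z)-\tfrac{1}{z^2}$ in \eqref{eq:wp-def}; since the truncating sets exhaust $\mathbb{Z}^2\setminus\{(0,0)\}$, absolute convergence guarantees that the middle sum tends to $\wp(z)-\tfrac{1}{z^2}$. The last sum is, by construction and the convention $a_{0,0}=0$, exactly the partial sum defining $G_2(K,\tau)$, so by Theorem~\ref{main-thm} it converges to $G_2(K,\tau)=G_2(\tau)+E(K,\tau)$. Adding back $\tfrac{1}{z^2}$ from the front then yields
\[
\wp(K,z)=\lim_{\lambda\to\infty}S_\lambda=\wp(z)+G_2(\tau)+E(K,\tau),
\]
which is the claimed identity; in particular the limit defining $\wp(K,z)$ exists.

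I do not anticipate a serious obstacle here: the nontrivial analytic content (existence of $G_2(K,\tau)$ and the evaluation of $E(K,\tau)$) has already been absorbed in Theorem~\ref{main-thm}, and the absolute convergence of the $\wp$-series is classical. The only point requiring mild care is verifying the exhaustion property of $(\lambda K)\cap\mathbb{Z}^2$, which justifies passing to the limit in the absolutely convergent piece along the shape-summation truncations rather than along rectangles or balls; this follows immediately from $0\in\mathrm{int}(K)$.
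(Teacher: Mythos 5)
Your proposal is correct and is essentially the paper's own argument: the paper likewise adds and subtracts the normalization term, writing $\wp(z)$ as a $K$-summation (justified by absolute convergence) equal to $\wp(K,z)-G_2(K,\tau)$, and then invokes $G_2(K,\tau)=G_2(\tau)+E(K,\tau)$ from Theorem~\ref{main-thm}. Your version merely phrases this at the level of the finite partial sums $S_\lambda$, which makes the existence of the limit defining $\wp(K,z)$ slightly more explicit, but the decomposition and the inputs used are the same.
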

\begin{proof}
By absolute convergence of the sum defining $\wp(z)$, we can sum over the integer lattice points in any order without changing the function's value. Therefore,
\[
\wp(z)=\sum_{K}\left[\frac{1}{(z+n+m\tau)^2}-\frac{1}{(n+m\tau)^2}\right],
\]
where $a_{0,0}=\frac{1}{z^2}$ in \eqref{eq:shape-summation}.
Thus, 
\begin{align*}
\wp(z)&=\sum_{K}\frac{1}{(z+n+m\tau)^2}-\sum_{K}\frac{1}{(n+m\tau)^2}\\
&=\wp(K,z)-G_2(K,\tau)\\
&=\wp(K,z)-(G_2(\tau)+E(K,\tau)).
\end{align*}  
\end{proof}

\section{Examples}

The integral in the explicit formula \eqref{eq:residual} can sometimes be evaluated in closed form. Here are a few examples.

\begin{enumerate}

\item \textbf{Rectangle.}
As we saw in Section~\ref{sec:intro}, the residual function for the rectangle $[-c,c]\times[-1,1]$ with aspect ratio $c>0$ is
$E(K,\tau)=-\frac{4}{\tau}\tanh^{-1}(c \tau).$ 

\item \textbf{Disk.}
When $K$ is the disk of radius $1$ centered at the origin, we have $h_K(x)=\sqrt{1-x^2}$, $x\in [-1,1]$.
According to Theorem \ref{main-thm}, we have
\[
E(K,\tau)=4\int_{0}^1\frac{\sqrt{1-x^2}}{\tau^2x^2-(1-x^2)}\, dx=\frac{-2\pi i}{\tau+i}.
\]

\item \textbf{Diamond.}
For the last example we let $K$ be the diamond $\{ x+iy\,:\, |x|+|y|\le 1 \}$. Then $h_K(x)=1-|x|$, $x\in[-1,1]$. We have
\begin{align*}
E(K,\tau)&=4\int_{0}^1 \frac{1-x}{\tau^2x^2-(1-x)^2}\, dx=\frac{4\log(-i\tau)+2\pi i \tau}{1-\tau^2},
\end{align*}
where log denotes the principal branch of the logarithm.

\end{enumerate}

\section{Concluding Remarks}
\label{sec:concluding}

\begin{enumerate}

\item In this paper we constructed a large family of examples of natural rearrangements of conditionally convergent series. Our explicit formula \eqref{eq:residual} for the residual function $E(K,\tau)$ provides a general way to evaluate the discrepancy between any rearrangement in the family we considered and the ``default'' ordering of the series, thus generalizing the well-known relation \eqref{eq:g2-order-summation} and its equivalent version \eqref{eq:g2-order-summation-equivalent}. It is worth noting that the quasimodularity relation \eqref{eq:g2-order-summation-equivalent}, while being an example of ``bad behavior'' from the point of view of infinite series, is actually a ``good'' (i.e., useful, and important) property of $G_2$, forming the basis for the study of many of its properties as well as the properties of additional functions in complex analysis and the theory of modular forms that are constructed using $G_2$ as a building block. Some well-known applications of $G_2$ are to the study of the modular discriminant $\Delta$ \cite[pp.~20--21]{zagier123}; to proving the four square theorem that gives an explicit formula for the number of representations of an integer as a sum of four squares \cite[Ch.~10]{Stein}; and to constructing the ``magic function'' that played a crucial role in Viazovska's remarkable recent solution of the sphere packing problem in dimension 8 \cite{viazovska} (see also \cite{cohn}).

In view of the importance of \eqref{eq:g2-order-summation-equivalent}, it is interesting to wonder whether \eqref{eq:residual} might similarly provide fresh insight into some questions about modular forms that are of independent interest.

\item
Also related to \eqref{eq:g2-order-summation-equivalent} is the observation that for any $K\in\mathcal{K}$, if $K$ is symmetric about the line $y=x$, then the residual function $E(K,\tau)$ satisfies a similar functional equation, namely
\begin{equation}
\label{eq:residual-quasimodular}
E(K,\tau)=\tau^{-2}E(K,-1/\tau)-\frac{2\pi i}{\tau},
\end{equation}
which differs from the equation for $G_2(\tau)$ only in the sign of the $\frac{2\pi i}{\tau}$ term. This can be derived as follows. Given a shape $K$, we let $K^T$ be the shape obtained by reflecting $K$ about the line $x=y$. 

Replacing $\tau$ with $-1/\tau$ in the $K$-summation of $G_2(K,\tau)$, one obtains
\begin{align*}
G(K,-1/\tau)&=\tau^2G(K^T,\tau)\\
&=\tau^2(G_2(\tau)+E(K^T,\tau)).
\end{align*}
Meanwhile, the functional equation for $G_2(\tau)$ implies that
\begin{align*}
G(K,-1/\tau)&=G_2(-1/\tau)+E(K,-1/\tau)\\
&=\tau^2G_2(\tau)-2\pi i \tau +E(K,-1/\tau).
\end{align*}
Equating these two expressions for $G(K,-1/\tau)$ and subtracting the term $\tau^2G_2(\tau)$ from both sides gives
\begin{equation*}
E(K^T,\tau)=\tau^{-2}E(K,-1/\tau)-\frac{2\pi i}{\tau}.
\end{equation*}
If $K$ is symmetric about $y=x$, then $K^T=K$, so \eqref{eq:residual-quasimodular} holds. 

\item
We saw how a shape $K \in \mathcal{K}$ gives rise to residual functions, which are computed as a kind of integral transform of the associated bounding function $h_K$. It seems natural to try to reverse this correspondence and ask which holomorphic functions $f:\mathbb{H}\to\mathbb{C}$ occur as residual functions for shapes in~$\mathcal{K}$. We leave this as an open problem.

\item Finally, we note that one can consider summation with respect to shapes in greater levels of generality. In particular, one could expand the class of shapes $\mathcal{K}$ by relaxing the symmetry conditions, the condition of compactness, or both. We leave to the interested reader to work out the details of such generalizations.

\end{enumerate}

\bigskip

}

\end{document}